\title{Images of $2$-adic representations associated to hyperelliptic Jacobians}
\author{Jeffrey Yelton}
\newtheorem{thm}{Theorem}[section]
\newtheorem{prop}[thm]{Proposition}
\newtheorem{cor}[thm]{Corollary}
\theoremstyle{definition} \newtheorem{rmk}[thm]{Remark}
\newcommand{\cc}{\mathbb{C}}
\newcommand{\qq}{\mathbb{Q}}
\newcommand{\zz}{\mathbb{Z}}
\newcommand{\Gal}{\mathrm{Gal}}
\begin{document}

\maketitle

\begin{abstract}

Let $k$ be a subfield of $\cc$ which contains all $2$-power roots of unity, and let $K = k(\alpha_{1}, \alpha_{2}, ... , \alpha_{2g + 1})$, where the $\alpha_{i}$'s are independent and transcendental over $k$, and $g$ is a positive integer.  We investigate the image of the $2$-adic Galois action associated to the Jacobian $J$ of the hyperelliptic curve over $K$ given by $y^{2} = \prod_{i = 1}^{2g + 1} (x - \alpha_{i})$.  Our main result states that the image of Galois in $\mathrm{Sp}(T_{2}(J))$ coincides with the principal congruence subgroup $\Gamma(2) \lhd \mathrm{Sp}(T_{2}(J))$.  As an application, we find generators for the algebraic extension $K(J[4]) / K$ generated by coordinates of the $4$-torsion points of $J$.

\end{abstract}

\section{Introduction}

Fix a positive integer $g$.  An affine model for a hyperelliptic curve over $\cc$ of genus $g$ may be given by 
\begin{equation}\label{hyperelliptic model} y^{2} = \prod_{i = 1}^{2g + 1} (x - \alpha_{i}), \end{equation}
with $\alpha_{i}$'s distinct complex numbers.  Now let $\alpha_{1}, ... , \alpha_{2g + 1}$ be transcendental and independent over $\cc$, and let $L$ be the subfield of $\cc(\alpha) := \cc(\alpha_{1}, ... , \alpha_{2g + 1})$ generated over $\cc$ by the elementary symmetric functions of the $\alpha_{i}$'s.  For any positive integer $N$, let $J[N]$ denote the $N$-torsion subgroup of $J(\bar{L})$.  For each $n \geq 0$, let $L_{n} = L(J[2^{n}])$ denote the extension of $L$ over which the $2^{n}$-torsion of $J$ is defined.  Set
$$L_{\infty} := \bigcup_{n = 1}^{\infty} L_{n}.$$
Note that $\cc(\alpha_{1}, ... \alpha_{2g + 1})$ is Galois over $L$ with Galois group isomorphic to $S_{2g + 1}$.  It is well known (\cite{Mumford1984tata}, Corollary 2.11) that $\cc(\alpha_{1}, ... , \alpha_{2g + 1}) = L_{1}$, so $\mathrm{Gal}(L_{1} / L) \cong S_{2g + 1}$.  Fix an algebraic closure $\bar{L}$ of $L$, and write $G_{L}$ for the absolute Galois group $\mathrm{Gal}(\bar{L} / L)$.

Let $C$ be the curve defined over $L$ by equation (\ref{hyperelliptic model}), and let $J / L$ be its Jacobian.  For any prime $\ell$, let   
$$T_{\ell}(J) := \lim_{\leftarrow n} J[\ell^{n}]$$
denote the $\ell$-adic Tate module of $J$; it is a free $\zz_{\ell}$-module of rank $2g$ (see \cite{mumford1974abelian}, \S18).  For the rest of this paper, we write $\rho_{\ell} : G_{L} \to \mathrm{Aut}(T_{\ell}(J))$ for the continuous homomorphism induced by the natural Galois action on $T_{\ell}(J)$.  Write $\mathrm{SL}(T_{\ell}(J))$ (resp. $\mathrm{Sp}(T_{\ell}(J))$) for the subgroup of automorphisms of the $2$-adic Tate module $T_{\ell}(J)$ with determinant $1$ (resp. automorphisms of $T_{\ell}(J)$ which preserve the Weil pairing).  Since $L$ contains all $2$-power roots of unity, the Weil pairing on $T_{2}(J)$ is Galois invariant, and it follows that the image of $\rho_{2}$ is contained in $\mathrm{Sp}(T_{2}(J))$.  For each $n \geq 0$, we denote by 
$$\Gamma(2^{n}) := \{g \in \mathrm{Sp}(T_{2}(J))\ |\ g \equiv 1\ \mathrm{(mod}\ 2^{n}\mathrm{)}\} \lhd \mathrm{Sp}(T_{2}(J))$$
the level-$2^{n}$ principal congruence subgroup of $\mathrm{Sp}(T_{2}(J))$.

Our main theorem is the following.

\begin{thm}\label{thm: section 2 main theorem}

With the above notation, the image under $\rho_{2}$ of the Galois subgroup fixing $L_{1}$ is $\Gamma(2) \lhd \mathrm{Sp}(T_{2}(J))$.

\end{thm}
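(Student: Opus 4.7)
The inclusion $\rho_{2}(\mathrm{Gal}(\bar{L}/L_{1})) \subseteq \Gamma(2)$ is immediate from the definition of $L_{1}$: every element of $\mathrm{Gal}(\bar{L}/L_{1})$ fixes $J[2]$ pointwise, and $\Gamma(2)$ is by definition the kernel of the reduction $\mathrm{Sp}(T_{2}(J)) \twoheadrightarrow \mathrm{Sp}(J[2])$. The content of the theorem is therefore the reverse inclusion, and my plan is to obtain it by interpreting $\rho_{2}(\mathrm{Gal}(\bar{L}/L_{1}))$ as the $2$-adic closure of a topological monodromy group.

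Since $\cc \subset L$ and $L_{1} = \cc(\alpha_{1}, \ldots, \alpha_{2g+1})$ is the function field of the ordered configuration space
\[ U := \{(a_{1}, \ldots, a_{2g+1}) \in \aff_{\cc}^{2g+1} : a_{i} \neq a_{j} \text{ for } i \neq j\}, \]
the equation (\ref{hyperelliptic model}) extends to a smooth projective family $\mathcal{C} \to U$ whose relative Jacobian $\mathcal{J} \to U$ is an abelian scheme. By the Betti-\'etale comparison together with the Riemann existence theorem, $\rho_{2}(\mathrm{Gal}(\bar{L}/L_{1}))$ coincides with the $2$-adic closure in $\mathrm{Sp}(T_{2}(J))$ of the image of the classical monodromy representation
\[ \mu : \pi_{1}(U^{\mathrm{an}}, u_{0}) \longrightarrow \mathrm{Sp}(H_{1}(\mathcal{J}_{u_{0}}^{\mathrm{an}}, \zz)) \subseteq \mathrm{Sp}(T_{2}(J)), \]
and $\pi_{1}(U^{\mathrm{an}}, u_{0}) \cong P_{2g+1}$, the pure braid group on $2g+1$ strands. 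The theorem is thus reduced to showing that $\mathrm{Im}(\mu)$ is $2$-adically dense in $\Gamma(2)$.

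To exhibit enough elements in the image I would use the Picard-Lefschetz description of $\mu$. For each pair $i \neq j$, let $\gamma_{i,j} \in P_{2g+1}$ be the ``full twist'' loop in which $\alpha_{i}$ travels once around $\alpha_{j}$ while the other branch points remain fixed. Degenerating $\alpha_{i}$ into $\alpha_{j}$ creates a node on the corresponding fiber of $\mathcal{C}$, yielding a vanishing cycle $\delta_{i,j} \in H_{1}(\mathcal{J}_{u_{0}}^{\mathrm{an}}, \zz)$ whose class modulo $2$ realizes the $2$-torsion point $[\alpha_{i}] - [\alpha_{j}] \in J[2]$. The Picard-Lefschetz formula then gives
\[ \mu(\gamma_{i,j}) \;=\; T_{\delta_{i,j}}^{2} \;=\; 1 + 2\,\langle \cdot,\,\delta_{i,j}\rangle\,\delta_{i,j}, \]
where $\langle \cdot, \cdot\rangle$ denotes the symplectic intersection pairing. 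In particular each $\mu(\gamma_{i,j})$ lies in $\Gamma(2)$.

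The main technical step, and the step I expect to be the principal obstacle, is to verify that the collection of elements $T_{\delta_{i,j}}^{2}$ generates a $2$-adically dense subgroup of $\Gamma(2)$. I would first check density modulo $4$: the $\bar{\delta}_{i,j} \in J[2]$ are known to exhaust the nonzero vectors of $J[2]$ as $(i,j)$ varies, and a direct symplectic linear algebra computation then shows that the rank-one operators $\langle \cdot,\bar\delta_{i,j}\rangle\bar\delta_{i,j}$ span the $\mathbb{F}_{2}$-vector space naturally identified with $\Gamma(2)/\Gamma(4)$. Once density modulo $4$ is in hand, the nesting $[\Gamma(2^{m}), \Gamma(2^{n})] \subseteq \Gamma(2^{m+n})$ lets one bootstrap inductively to density modulo $2^{n}$ for all $n \geq 2$: commutators of the already-produced mod-$4$ generators land in successively deeper congruence subgroups and fill them out. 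Compactness of $\Gamma(2)$ then promotes density to equality, completing the proof.
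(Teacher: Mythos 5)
Your reduction of the theorem to a statement about topological monodromy is the same in spirit as the paper's: your one sentence invoking ``the Betti--\'etale comparison together with the Riemann existence theorem'' is exactly what the paper's Proposition \ref{prop: isomorphic representations} establishes, in five careful steps (compactifying $C$, passing to \'etale fundamental groups, passing to the Jacobian, comparing the generic and closed fibers via Grothendieck's specialization theorem, and identifying the resulting action with the Galois action on torsion points); that is real but routine work which you are entitled to compress. Where you genuinely diverge is at the key input. The paper simply cites the theorem of A'Campo and Mumford that the image of $P_{2g+1}$ under $R$ is \emph{exactly} $\Gamma(2)$ inside $\mathrm{Sp}(H_{1}(\mathcal{C}_{T_{0}},\zz))$, i.e.\ over $\zz$, and then passes to $2$-adic closures. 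You instead propose to reprove $2$-adic density from scratch, starting from the Picard--Lefschetz description $\mu(\gamma_{i,j})=T_{\delta_{i,j}}^{2}=1+2\langle\cdot,\delta_{i,j}\rangle\delta_{i,j}$ of the images of the pure braid generators. That is a legitimate alternative strategy (it is essentially how A'Campo's theorem is proved), but your execution has a gap.

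The gap is in the bootstrap from density mod $4$ to density mod $2^{n}$. You claim that the inclusion $[\Gamma(2^{m}),\Gamma(2^{n})]\subseteq\Gamma(2^{m+n})$ lets commutators ``fill out'' the deeper congruence subgroups. It does not: that very inclusion shows commutators land \emph{too} deep, and the only commutator contributions to the graded piece $\Gamma(2^{n+1})/\Gamma(2^{n+2})\cong\mathfrak{sp}_{2g}(\mathbb{F}_{2})$ come from brackets of elements of $\mathfrak{sp}_{2g}(\mathbb{F}_{2})$, which span only the derived subalgebra --- a proper subspace in characteristic $2$ (its image under $A\mapsto JA$ consists of symmetric matrices with zero diagonal, so it has codimension $2g$; already for $g=1$ the derived subalgebra of $\mathfrak{sl}_{2}(\mathbb{F}_{2})$ is one-dimensional out of three). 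To close the induction you must also use squares, i.e.\ show that your closed subgroup contains the Frattini subgroup $\overline{\Gamma(2)^{2}[\Gamma(2),\Gamma(2)]}$ and that this equals $\Gamma(4)$; this is true but requires a computation in $\mathfrak{sp}_{2g}(\mathbb{F}_{2})$ (the span of $\{A+A^{2}\}$ together with the brackets) that you have not supplied, and at the prime $2$ such Frattini-type statements are exactly where naive arguments fail. (The paper's \S3 in effect derives $\Phi(\Gamma(2))=\Gamma(4)$ from A'Campo's theorem plus $P_{2g+1}^{\mathrm{ab}}\cong\zz^{2g^{2}+g}$ --- the input you are trying to avoid.) A smaller error: the classes $\bar{\delta}_{i,j}$ do \emph{not} exhaust $J[2]\setminus\{0\}$ for $g\geq2$, since there are only $\binom{2g+1}{2}=2g^{2}+g$ of them against $2^{2g}-1$ nonzero $2$-torsion points; the statement you actually need --- that the $2g^{2}+g$ rank-one operators are linearly independent in $\Gamma(2)/\Gamma(4)\cong(\zz/2\zz)^{2g^{2}+g}$ --- is correct but must be argued on its own.
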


Before setting out to prove this theorem, we state some easy corollaries.

\begin{cor}\label{prop: description of G}

Let $G$ denote the image under $\rho_{2}$ of all of $G_{L}$.  Then we have the following: 

a) $G$ contains $\Gamma(2) \lhd \mathrm{Sp}(T_{2}(J))$, and $G / \Gamma(2) \cong S_{2g + 1}$.

b) In the case that $g = 1$, $G = \mathrm{Sp}(T_{2}(J)) = \mathrm{SL}(T_{2}(J))$.

c) For each $n \geq 1$, the homomorphism $\rho_{2}$ induces an isomorphism 
$$\bar{\rho}_{2}^{(n)} : \mathrm{Gal}(L_{n} / L_{1}) \stackrel{\sim}{\to} \Gamma(2) / \Gamma(2^{n})$$
 via the restriction map $\mathrm{Gal}(\bar{L} / L_{1}) \twoheadrightarrow \mathrm{Gal}(L_{n} / L_{1})$.

\end{cor}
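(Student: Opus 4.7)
The plan is to derive all three parts as direct consequences of Theorem \ref{thm: section 2 main theorem} together with the Galois-theoretic setup already laid out in the introduction. The key facts I will reuse throughout are that $\Gamma(2)$ is precisely the kernel of the reduction map $\mathrm{Sp}(T_{2}(J)) \twoheadrightarrow \mathrm{Sp}(J[2])$, that $\mathrm{Gal}(\bar{L}/L_{n})$ is the kernel of the induced action of $G_{L}$ on $J[2^{n}]$ by definition of $L_{n}$, and the identification $\mathrm{Gal}(L_{1}/L) \cong S_{2g+1}$ quoted from Mumford.

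For part (a), the containment $\Gamma(2) \subseteq G$ is immediate, since $\mathrm{Gal}(\bar{L}/L_{1}) \subseteq G_{L}$ and its $\rho_{2}$-image is $\Gamma(2)$ by Theorem \ref{thm: section 2 main theorem}. To identify the quotient, I would note that $G/\Gamma(2)$ coincides with the image of $G_{L}$ under the mod-$2$ representation $\bar{\rho}_{2} : G_{L} \to \mathrm{Sp}(J[2])$. Since the kernel of $\bar{\rho}_{2}$ is by definition $\mathrm{Gal}(\bar{L}/L_{1})$, its image is isomorphic to $\mathrm{Gal}(L_{1}/L) \cong S_{2g+1}$.

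For part (b), when $g = 1$ the Tate module $T_{2}(J)$ is free of rank $2$, so the Weil pairing is, up to a scalar, the determinant; hence $\mathrm{Sp}(T_{2}(J)) = \mathrm{SL}(T_{2}(J))$ and, reducing mod $2$, $\mathrm{Sp}(J[2]) = \mathrm{SL}_{2}(\mathbb{F}_{2}) \cong S_{3}$. Combining part (a) with $|S_{3}| = 6$, the surjection $G \twoheadrightarrow G/\Gamma(2)$ lands onto all of $\mathrm{Sp}(J[2])$, and together with $\Gamma(2) \subseteq G$ this forces $G = \mathrm{Sp}(T_{2}(J))$.

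For part (c), Theorem \ref{thm: section 2 main theorem} gives a surjection $\rho_{2}|_{\mathrm{Gal}(\bar{L}/L_{1})} : \mathrm{Gal}(\bar{L}/L_{1}) \twoheadrightarrow \Gamma(2)$, and composition with $\Gamma(2) \twoheadrightarrow \Gamma(2)/\Gamma(2^{n})$ produces a surjection whose kernel consists of elements whose $\rho_{2}$-image lies in $\Gamma(2^{n})$, i.e., those acting trivially on $J[2^{n}]$. By definition of $L_{n}$ this kernel is exactly $\mathrm{Gal}(\bar{L}/L_{n})$, and passing to the quotient yields the isomorphism $\bar{\rho}_{2}^{(n)} : \mathrm{Gal}(L_{n}/L_{1}) \stackrel{\sim}{\to} \Gamma(2)/\Gamma(2^{n})$. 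I do not anticipate any real obstacle; every part is just an unwinding of the main theorem together with standard Galois-theoretic bookkeeping, and the only point requiring even a brief remark is the coincidence $\mathrm{Sp} = \mathrm{SL}$ in rank $2$ used in part (b).
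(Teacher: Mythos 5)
Your proposal is correct and follows essentially the same route as the paper's proof: part (a) from Theorem \ref{thm: section 2 main theorem} together with $\mathrm{Gal}(L_{1}/L) \cong S_{2g+1}$, part (b) from the coincidence $\mathrm{Sp}_{2} = \mathrm{SL}_{2}$ and the order count against $\mathrm{SL}_{2}(\zz/2\zz) \cong S_{3}$, and part (c) by identifying the kernel of $\mathrm{Gal}(\bar{L}/L_{1}) \twoheadrightarrow \Gamma(2)/\Gamma(2^{n})$ with $\mathrm{Gal}(\bar{L}/L_{n})$. The only cosmetic difference is that you deduce surjectivity in (b) by cardinality from the injection $G/\Gamma(2) \hookrightarrow \mathrm{Sp}(J[2])$ rather than quoting the known isomorphism $\mathrm{SL}_{2}(\zz_{2})/\Gamma(2) \cong \mathrm{SL}_{2}(\zz/2\zz)$, which changes nothing of substance.
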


\begin{proof}

Since $\mathrm{Gal}(L_{1} / L) \cong S_{2g + 1}$, part (a) immediately follows from the theorem.  If $g = 1$, then fix a basis of $T_{2}(J)$ so that we may identify $\mathrm{Sp}(T_{2}(J))$ (resp. $\mathrm{SL}(T_{2}(J))$) with $\mathrm{Sp}_{2}(\zz_{2})$ (resp. $\mathrm{SL}_{2}(\zz_{2})$).  Then it is well known that $\mathrm{Sp}_{2}(\zz_{2}) = \mathrm{SL}_{2}(\zz_{2})$, and that $\mathrm{SL}_{2}(\zz_{2}) / \Gamma(2) \cong \mathrm{SL}_{2}(\zz / 2\zz) \cong S_{3}$.  Since, by part (a), $G / \Gamma(2) \cong S_{3}$ when $g = 1$, the linear subgroup $G$ must be all of $\mathrm{Sp}(T_{2}(J)) = \mathrm{SL}(T_{2}(J))$, which is the statement of (b).  To prove part (c), note that for any $n \geq 0$, the image under $\rho_{2}$ of the Galois subgroup fixing the $2^{n}$-torsion points is clearly $G \cap \Gamma(2^{n})$.  But $G > \Gamma(2)$, so for any $n \geq 1$, the image under $\rho_{2}$ of $\mathrm{Gal}(\bar{L} / L(2^{n}))$ is $\Gamma(2^{n})$.  Then part (c) immediately follows by the definition of $\bar{\rho}_{2}^{(n)}$.

\end{proof}

In \S2, we will prove the main theorem by considering a family of hyperelliptic curves whose generic fiber is $C$.  In \S3, we will use the results of the previous two sections to determine generators for the algebraic extension $L_{2} / L$ (Theorem \ref{prop: 2 and 4 case}).  Finally, in \S4, we will generalize Theorems \ref{thm: section 2 main theorem} and \ref{prop: 2 and 4 case} by descending from $\cc$ to a subfield $k \subset \cc$ which contains all $2$-power roots of unity.

\section{Families of hyperelliptic Jacobians}

In order to prove Theorem \ref{thm: section 2 main theorem}, we study a family of hyperelliptic curves parametrized by all (unordered) $(2g + 1)$-element subsets $T = \{\alpha_{i}\} \subset \cc$ whose generic fiber is $C$.  Let $e_{1} := \sum_{i = 1}^{2g + 1} \alpha_{i}, ... , e_{2g + 1} := \prod_{i = 1}^{2g + 1} \alpha_{i}$ be the elementary symmetric functions of the variables $\alpha_{i}$, and let $\Delta$ be the discriminant function of these variables.  Then the base of this family is the affine variety over $\cc$ given by 
\begin{equation}\label{configuration space} X := \mathrm{Spec}(\cc[e_{1}, e_{2}, ... , e_{2g + 1}, \Delta^{-1}]). \end{equation}
This complex affine scheme may be viewed as the configuration space of $(2g + 1)$-element subsets of $\cc$ (see the discussion in Section 6 of \cite{Yu1997toward}).  More precisely, we identify each $\cc$-point $T = (e_{1}, e_{2}, ... , e_{2g + 1})$ of $X$ with the set of roots of the squarefree degree-$(2g + 1)$ polynomial $z^{2g + 1} - e_{1}z^{2g} + e_{2}z^{2g - 1} - ... - e_{2g + 1} \in \cc[z]$, which is a $(2g + 1)$-element subset of $\cc$.  Note that the function field of $X$ is $L$.  The (topological) fundamental group of $X$ is isomorphic to $B_{2g + 1}$, the braid group on $2g + 1$ strands.  The braid group $B_{2g + 1}$ is generated by elements $\sigma_{1}$, $\sigma_{2}$, ... , $\sigma_{2g}$, with relations $\sigma_{1}\sigma_{i + 1}\sigma_{i} = \sigma_{i + 1}\sigma_{i}\sigma_{i + 1}$ for $1 \leq i \leq 2g$ and $\sigma_{i}\sigma_{j} = \sigma_{j}\sigma_{i}$ for $2 \leq i + 1 < j \leq 2g$.  (See section 1.4 of \cite{birman1974braids} for more details.)

We also define the complex affine scheme 
\begin{equation}\label{ordered configuration space} Y := \mathrm{Spec}(\cc[\alpha_{1}, \alpha_{2}, ... , \alpha_{2g + 1}, \{(\alpha_{i} - \alpha_{j})^{-1}\}_{1 \leq i < j \leq 2g + 1}]). \end{equation}
As a complex manifold, $Y$ is the \textit{ordered} configuration space, whose $\cc$-points may be identified with $2g + 1$-element subsets of $\cc$ which are given an ordering (a $\cc$-point is identified with its coordinates $(\alpha_{1}, \alpha_{2}, ... , \alpha_{2g + 1})$).  There is an obvious covering map $Y \to X$ which sends each point $(\alpha_{1}, \alpha_{2}, ... , \alpha_{2g + 1})$ of $Y$ to the point in $X$ corresponding to the (unordered) subset $\{\alpha_{1}, \alpha_{2}, ... , \alpha_{2g + 1}\}$.  The \textit{pure} braid group on $2g + 1$ strands, denoted $P_{2g + 1}$, is defined to be the kernel of the surjective homomorphism from $B_{2g + 1}$ to the symmetric group $S_{2g + 1}$ which sends $\sigma_{i}$ to $(i, i + 1) \in S_{2g + 1}$ for $1 \leq i \leq 2g$ (see the proof of Theorem 1.8 in \cite{birman1974braids}).  Then $P_{2g + 1} \lhd B_{2g + 1}$ is the (normal) subgroup corresponding to the cover $Y \to X$, and is therefore isomorphic to the fundamental group of $Y$.

Let $\mathcal{O}_{X}$ denote the coordinate ring of $X$, and let $F(x) \in \mathcal{O}_{X}[x]$ be the degree-$(2g + 1)$ polynomial given by 
\begin{equation}\label{universal polynomial} x^{2g + 1} + \sum_{i = 1}^{2g + 1} (-1)^{i}e_{i}x^{2g + 1 - i}.\end{equation}
Now denote by $\mathcal{C} \rightarrow X$ the affine scheme defined by the equation $y^{2} = F(x)$.  Clearly, $\mathcal{C}$ is the family over $X$ whose fiber over a point $T \in X(\cc)$ is the smooth affine hyperelliptic curve defined by $y^{2} = \prod_{z \in T} (x - z)$, and the generic fiber of $\mathcal{C}$ is $C / L$.  Fix a basepoint $T_{0}$ of $X$, and a basepoint $P_{0}$ of $\mathcal{C}_{T_{0}}$.  Then we have a short exact sequence of fundamental groups 
\begin{equation}\label{SES of fundamental groups} 1 \rightarrow \pi_{1}(\mathcal{C}_{T_{0}}, P_{0}) \rightarrow \pi_{1}(\mathcal{C}, P_{0}) \rightarrow \pi_{1}(X, T_{0}) \rightarrow 1. \end{equation}

We now construct a continuous section $s: X \rightarrow \mathcal{C}$, following the proof of Lemma 6.1 and the discussion in \cite{Yu1997toward}, \S6.  For $i = 1, 2$, let $\mathcal{E}_{i} \to X$ be the affine scheme given by $\mathrm{Spec}(\mathcal{O}_{X}[x, y]/(y^{i} - F(x))[F(x)^{-1}])$.  Then $\mathcal{E}_{1} \to X$ is clearly the family of complex topological spaces whose fiber over a point $T \in X$ can be identified with $\cc \setminus T$, and there is an obvious degree-$2$ cover $\mathcal{E}_{2} \to \mathcal{E}_{1}$.  Let $t : X \to \mathcal{E}_{1}$ be the continuous map of complex topological spaces which sends a point $T \in X$ to $\mathrm{max}_{z \in T}\{|z|\} + 1 \in \cc \setminus T = \mathcal{E}_{1, T}$.  This section then lifts to a section $\tilde{t} : X \to \mathcal{E}_{2}$.  Define $s : X \rightarrow \mathcal{C}$ to be the composition of $\tilde{t}$ with the obvious inclusion map $\mathcal{E}_{2} \hookrightarrow \mathcal{C}$.  It is easy to check from the construction of $s$ that it is a section of the family $\mathcal{C} \to X$.

The section $s$ induces a monodromy action of $\pi_{1}(X, T_{0})$ on $\pi_{1}(\mathcal{C}_{T_{0}}, P_{0})$, which is given by $\sigma \in \pi_{1}(X)$ acting as conjugation by $s(\sigma)$ on $\pi_{1}(\mathcal{C}_{T_{0}}, P_{0}) \lhd \pi_{1}(\mathcal{C}, P_{0})$.  This induces an action of $B_{2g + 1}$ on the abelianization of $\pi_{1}(\mathcal{C}_{T_{0}}, P_{0})$, the homology group $H_{1}(\mathcal{C}_{T_{0}}, \zz)$, which is isomorphic to $\zz^{2g}$.  We denote this action by
\begin{equation}\label{braid group representation} R: B_{2g + 1} \cong \pi_{1}(X, T_{0}) \rightarrow \mathrm{Aut}(H_{1}(\mathcal{C}_{T_{0}}, \zz)). \end{equation}
This action respects the intersection pairing on $\mathcal{C}_{T_{0}}$, so the image of $R$ is actually contained in the corresponding subgroup of symplectic automorphisms $\mathrm{Sp}(H_{1}(\mathcal{C}_{T_{0}}, \zz))$.

The following theorem is proven in \cite{a1979tresses} (Th\'{e}or\'{e}me 1), as well as in \cite{Mumford1984tata} (Lemma 8.12).

\begin{thm}\label{thm: from Mumford}
In the representation $R: B_{2g + 1} \rightarrow \mathrm{Sp}(H_{1}(\mathcal{C}_{T_{0}}, \zz))$, the image of $P_{2g + 1}$ coincides with $\Gamma(2)$.
\end{thm}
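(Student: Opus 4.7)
The plan is to identify the image of each braid generator $\sigma_{i}$ as a symplectic transvection and then establish the two inclusions $R(P_{2g+1}) \subseteq \Gamma(2)$ and $R(P_{2g+1}) \supseteq \Gamma(2)$ separately. For the identification, fix the basepoint $T_{0}$ together with $2g$ disjoint simple arcs in $\cc$ joining consecutive roots $\alpha_{i}$ and $\alpha_{i+1}$. The hyperelliptic double cover associates to each such arc a vanishing cycle $\delta_{i} \in H_{1}(\mathcal{C}_{T_{0}}, \zz)$, and these cycles can be chosen to form a basis of $H_{1}(\mathcal{C}_{T_{0}}, \zz)$ with intersection pairings $\langle \delta_{i}, \delta_{i+1}\rangle = \pm 1$ and $\langle \delta_{i}, \delta_{j}\rangle = 0$ for $|i - j| \geq 2$. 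The classical Picard--Lefschetz formula then identifies $R(\sigma_{i})$ with the symplectic transvection $T_{\delta_{i}} \colon v \mapsto v + \langle v, \delta_{i}\rangle \delta_{i}$.

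For $R(P_{2g+1}) \subseteq \Gamma(2)$ I would reduce modulo $2$. Each $R(\sigma_{i})$ then reduces to a transvection $\bar T_{\bar\delta_{i}} \in \mathrm{Sp}_{2g}(\mathbb{F}_{2})$. A direct calculation gives $T_{\delta_{i}}^{2}(v) = v + 2\langle v, \delta_{i}\rangle\delta_{i}$, so $\bar T_{\bar\delta_{i}}^{2} = 1$ in $\mathrm{Sp}_{2g}(\mathbb{F}_{2})$, while the braid relations among the $\bar T_{\bar\delta_{i}}$ are preserved under reduction. Thus the $\bar T_{\bar\delta_{i}}$ satisfy precisely the Coxeter relations of $S_{2g+1}$, and the mod-$2$ representation factors through the canonical surjection $B_{2g+1} \twoheadrightarrow S_{2g+1}$ (with $S_{2g+1}$ realized as the subgroup of $\mathrm{Sp}_{2g}(\mathbb{F}_{2})$ that permutes the branch points, or equivalently acts on the sum-zero hyperplane of $\mathbb{F}_{2}^{2g+1}$). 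Since $P_{2g+1}$ is by definition the kernel of this surjection, its image under $R$ lies in $\Gamma(2)$.

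The reverse inclusion is the main obstacle. The strategy is to exhibit elements of $P_{2g+1}$ whose images generate $\Gamma(2) \lhd \mathrm{Sp}(H_{1}(\mathcal{C}_{T_{0}}, \zz))$. The natural starting point is that the squares $\sigma_{i}^{2} \in P_{2g+1}$ map to squared transvections $T_{\delta_{i}}^{2}$ along the chain basis $\delta_{1}, \ldots, \delta_{2g}$; squared transvections along more general primitive classes are then produced by conjugating $\sigma_{k}^{2}$ by other pure braid words (for instance, the Artin combing generators $A_{ij}$), which by the identification of $R$ above translates into conjugating $T_{\delta_{k}}^{2}$ by explicit symplectic elements and yields $T_{\gamma}^{2}$ for various primitive $\gamma$. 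The crux of the proof is then the purely group-theoretic assertion that $\Gamma(2) \lhd \mathrm{Sp}_{2g}(\zz)$ is generated by such squared symplectic transvections; this is the step that requires genuine work and is where the arguments in the cited references of A'Campo and Mumford are concentrated.
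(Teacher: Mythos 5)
The first thing to say is that the paper does not actually prove this theorem: it is quoted with attributions to A'Campo (Th\'eor\`eme 1 of \emph{Tresses, monodromie et le groupe symplectique}) and to Mumford (Lemma 8.12 of the Tata Lectures), so there is no in-paper argument to measure your proposal against. Judged on its own terms, the first half of your proposal is correct and is the standard argument: the identification of $R(\sigma_{i})$ with the Picard--Lefschetz transvection $T_{\delta_{i}}$ along the chain of vanishing cycles, and the observation that mod $2$ the $\bar T_{\bar\delta_{i}}$ become involutions still satisfying the braid relations, so that the mod-$2$ representation factors through $B_{2g+1} \twoheadrightarrow S_{2g+1}$ and hence $R(P_{2g+1}) \subseteq \Gamma(2)$. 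That inclusion is genuinely established by what you wrote.

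The reverse inclusion, however, is the entire content of the theorem, and your write-up does not prove it; it names a strategy and then explicitly defers ``the step that requires genuine work'' to the very references the paper already cites, which means the proposal reduces the theorem to two unproven claims rather than establishing it. Concretely: (i) conjugating $R(\sigma_{k}^{2}) = T_{\delta_{k}}^{2}$ by elements of $R(B_{2g+1})$ produces $T_{\gamma}^{2}$ only for $\gamma$ in the $R(B_{2g+1})$-orbit of the $\delta_{k}$, and describing that orbit without already knowing how large the image is makes the argument circular as stated --- you would need to exhibit, by explicit pure-braid words, enough classes $\gamma$ (say, representatives of every $\Gamma(2)$-orbit of primitive vectors) to proceed; and (ii) the assertion that $\Gamma(2) \lhd \mathrm{Sp}_{2g}(\zz)$ is generated by squares of transvections along primitive vectors is itself a nontrivial theorem (already for $g=1$ one must check, for example, that $-I$ is such a product, which requires transvections along vectors outside the standard chain), and you give no argument for it. Both A'Campo and Mumford in fact carry out an induction on $g$ inside the symplectic group rather than invoking such a generation statement as a black box. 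As it stands, your proposal is an accurate road map for the easy containment plus a pointer to the literature for the hard one --- which is exactly what the paper itself does by citation, but it is not a proof.
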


Let $\widehat{B}_{2g + 1}$ denote the profinite completion of $B_{2g + 1} \cong \pi_{1}(X, T_{0})$.  Since $X$ may be viewed as a scheme over the complex numbers, Riemann's Existence Theorem yields an isomorphism between its \'{e}tale fundamental group $\pi_{1}^{\acute{e}t}(X, T_{0})$ and $\widehat{B}_{2g + 1}$  (\cite{grothendieck2002rev}, Expos\'{e} XII, Corollaire 5.2).  Meanwhile, $\pi_{1}^{\acute{e}t}(X, T_{0})$ is isomorphic to the Galois group $\Gal(L^{\mathrm{unr}} / L)$, where $L^{\mathrm{unr}}$ is the maximal extension of $L$ unramified at all points of $X$.  The representation $R : B_{2g + 1} \to \mathrm{Sp}(H_{1}(\mathcal{C}_{T_{0}}, \zz))$ induces a homomorphism of profinite groups 
\begin{equation} R : \Gal(L^{\mathrm{unr}} / L) = \widehat{B}_{2g + 1} \to \mathrm{Sp}(H_{1}(\mathcal{C}_{T_{0}}, \zz) \otimes \zz_{\ell}) \end{equation}
for any prime $\ell$.  Composing this map with the restriction homomorphism $G_{L} := \Gal(\bar{L} / L) \twoheadrightarrow \Gal(L^{\mathrm{unr}} / L)$ yields a map which we denote $R_{\ell} : G_{L} \to \mathrm{Sp}(H_{1}(\mathcal{C}_{T_{0}}, \zz) \otimes \zz_{\ell})$.  The following proposition will allow us to convert the above topological result into the arithmetic statement of Theorem \ref{thm: section 2 main theorem}.

\begin{prop}\label{prop: isomorphic representations}

Assume the above notation, and let $\ell$ be any prime.  Then there is an isomorphism of $\zz_{\ell}$-modules $T_{\ell}(J) \stackrel{\sim}{\to} H_{1}(\mathcal{C}_{T_{0}}, \zz) \otimes \zz_{\ell}$ making the representations $\rho_{\ell}$ and $R_{\ell}$ isomorphic.

\end{prop}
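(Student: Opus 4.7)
The plan is to realize both sides as stalks of a single lisse $\ell$-adic sheaf on $X$ and to exploit the fact that, for such a sheaf, the $\pi_{1}^{\acute{e}t}(X)$-action is essentially independent of the point at which one takes the stalk.

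First, compactify $\mathcal{C} \to X$ to a smooth proper family $\bar{\mathcal{C}} \to X$ by adjoining the section at infinity, and form its relative Jacobian $\mathcal{J} \to X$. This is an abelian scheme whose generic fiber is $J/L$ and whose fiber over $T_{0} \in X(\cc)$ is the Jacobian of $\bar{\mathcal{C}}_{T_{0}}$. For every $n$, the group scheme $\mathcal{J}[\ell^{n}] \to X$ is finite \'{e}tale, so the inverse system yields a lisse $\zz_{\ell}$-sheaf $\mathcal{T}_{\ell}$ on $X$ of rank $2g$, whose stalks carry canonical actions of $\pi_{1}^{\acute{e}t}(X)$. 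In particular, each $L(J[\ell^{n}])/L$ is unramified over $X$, so $\rho_{\ell}$ factors through $\Gal(L^{\mathrm{unr}}/L) \cong \pi_{1}^{\acute{e}t}(X, T_{0})$.

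Next, identify the two stalks. At a geometric generic point the stalk of $\mathcal{T}_{\ell}$ is $T_{\ell}(J)$, with $\pi_{1}^{\acute{e}t}(X)$-action induced by $\rho_{\ell}$. At the $\cc$-point $T_{0}$ the stalk is $T_{\ell}(\mathcal{J}_{T_{0}})$, and for a complex abelian variety the exponential sequence gives a canonical symplectic isomorphism $T_{\ell}(\mathcal{J}_{T_{0}}) \cong H_{1}(\mathcal{J}_{T_{0}}, \zz) \otimes \zz_{\ell}$; composing with the Abel--Jacobi isomorphism $H_{1}(\bar{\mathcal{C}}_{T_{0}}, \zz) \cong H_{1}(\mathcal{J}_{T_{0}}, \zz)$ and with the isomorphism $H_{1}(\mathcal{C}_{T_{0}}, \zz) \cong H_{1}(\bar{\mathcal{C}}_{T_{0}}, \zz)$ induced by inclusion (an isomorphism because the complement consists of a single point) produces a symplectic identification of this stalk with $H_{1}(\mathcal{C}_{T_{0}}, \zz) \otimes \zz_{\ell}$. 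Since $\mathcal{T}_{\ell}$ is lisse on the connected base $X$, a choice of \'{e}tale path between the two points produces an isomorphism between the two stalk representations of $\pi_{1}^{\acute{e}t}(X)$.

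Finally, one must match the $\pi_{1}^{\acute{e}t}(X, T_{0}) = \widehat{B}_{2g + 1}$-action on the stalk at $T_{0}$ with the representation $R_{\ell}$ defined in the paper. Using Riemann's Existence Theorem, already invoked above, the required statement is that the $\widehat{B}_{2g + 1}$-action on $H_{1}(\mathcal{C}_{T_{0}}, \zz) \otimes \zz_{\ell}$ arising from $\mathcal{T}_{\ell}$ is precisely the profinite/$\zz_{\ell}$-linear extension of the classical monodromy $R$, which by definition is $R_{\ell}$ restricted to $\Gal(L^{\mathrm{unr}}/L)$. This last compatibility is the main obstacle: it requires invoking Artin's comparison theorem between \'{e}tale and singular first cohomology of each fiber, together with the observation that the topological monodromy used to define $R$ (via the section $s$) agrees with the classical monodromy of the local system $R^{1}(\bar{\mathcal{C}}/X)_{*}\zz$ on $X$, to which \'{e}tale specialization reduces. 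Granting this, chasing the identifications above produces an isomorphism of $\zz_{\ell}$-modules under which $\rho_{\ell}$ and $R_{\ell}$ correspond, proving the proposition.
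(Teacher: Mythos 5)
Your proof is correct, and it takes a genuinely different (though ultimately equivalent) route from the paper's. You assemble the torsion subschemes into a lisse $\zz_{\ell}$-sheaf $\mathcal{T}_{\ell} = \lim_{\leftarrow n} \mathcal{J}[\ell^{n}]$ on $X$ and compare its stalks at $\bar{\eta}$ and $\bar{T_{0}}$ by an \'{e}tale path; the paper never forms this sheaf, and instead chases a three-row diagram of homotopy exact sequences for $\mathcal{J} \to X$ based at $\bar{L}$, $\bar{\eta}$ and $\bar{T_{0}}$, with Grothendieck's Specialization Theorem supplying the isomorphism $\mathrm{sp}$ that your choice of path encodes. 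The two mechanisms are the same fact in different clothing (lisseness of $\mathcal{T}_{\ell}$ is exactly smooth-properness of $\mathcal{J}/X$ plus specialization), but your formulation buys a real simplification at the special fiber: since you only ever work on homology, the monodromy is independent of the choice of section, so the paper's bookkeeping --- deforming $s$ to the constant section at infinity in Step 1 and pushing it to the zero section of $\mathcal{J}$ in Step 3 --- collapses to the one-line observation that conjugation by two different lifts of a loop differs by an inner automorphism of $\pi_{1}(\mathcal{C}_{T_{0}}, P_{0})$, which acts trivially on $H_{1}(\mathcal{C}_{T_{0}}, \zz)$. I would state that line explicitly rather than burying it inside ``the observation that\dots'', since $R$ is \emph{defined} via $s$ and this is precisely where your sheaf-theoretic monodromy and the paper's $R$ get reconciled. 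Two smaller points: you invoke Artin's comparison theorem fiberwise where the paper uses Riemann's Existence Theorem on each fiber (either works); and your opening assertion that the stalk of $\mathcal{T}_{\ell}$ at $\bar{\eta}$ carries exactly the action $\rho_{\ell}$ is not a tautology --- it is the content of the paper's Step 5, where the splitting by the zero section is shown to induce the usual Galois action on $J[\ell^{n}]$ via translations --- so it deserves a sentence of justification rather than being treated as the definition.
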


\begin{proof}

We proceed in five steps.

\emph{Step 1:} We switch from the affine curve $C$ to a smooth compactification of $C$, which is defined as follows.  Let $C'$ be the (smooth) curve defined over $L$ by the equation 
\begin{equation} \label{other affine piece} y'^{2} = x' \prod_{i = 1}^{2g + 1} (1 - \alpha_{i}x'). \end{equation}
We glue the open subset of $C$ defined by $x \neq 0$ to the open subset of $C'$ defined by $x' \neq 0$ via the mapping 
$$x' \mapsto \frac{1}{x},\ y' \mapsto \frac{y}{x^{g + 1}}, $$
and denote the resulting smooth, projective scheme by $\bar{C}$.  (See \cite{Mumford1984tata}, \S1 for more details of this construction.)  Let $\infty \in \bar{C}(L)$ denote the ``point at infinity" given by $(x', y') = (0, 0) \in C'$.  The curve $\bar{C}$ has smooth reduction over every point $T \in X$ and therefore can be extended in an obvious way to a family $\bar{\mathcal{C}} \to X$ whose generic fiber is $\bar{C} / L$.  Note that $\bar{\mathcal{C}}_{T}$ is a smooth compactification of $\mathcal{C}_{T}$ for each $T \in X$.  There is a surjective map $\pi_{1}(\mathcal{C}_{T_{0}}, P_{0}) \twoheadrightarrow \pi_{1}(\bar{\mathcal{C}}_{T_{0}}, \infty_{T_{0}})$ induced by the inclusion $\mathcal{C} \hookrightarrow \bar{\mathcal{C}}$.  Note also that the section $s : X \to \mathcal{C} \subset \bar{\mathcal{C}}$ can be continuously deformed to the ``constant section" $\bar{s} : X \to \bar{\mathcal{C}}$ sending each $T \in X$ to the point at infinity $\infty_{T} \in \mathcal{C}_{T}$.  Therefore, $\bar{s}_{*} : \pi_{1}(X, T_{0}) \to \pi_{1}(\bar{\mathcal{C}}_{T_{0}}, \infty_{T_{0}})$ is the composition of $s_{*}$ with the map $\pi_{1}(\mathcal{C}_{T}) \twoheadrightarrow \pi_{1}(\bar{\mathcal{C}}_{T})$.  In this way, we may view the action of $\pi_{1}(X, T_{0})$ on $\pi_{1}(\mathcal{C}_{T_{0}}, P_{0})^{\mathrm{ab}} = \pi_{1}(\bar{\mathcal{C}}_{T_{0}}, \infty_{T_{0}})^{\mathrm{ab}}$ as being induced by $\bar{s}_{*}$.

\emph{Step 2:} We switch from (topological) fundamental groups to \'{e}tale fundamental groups.  Since $X$ and $\mathcal{C}$, as well as $\mathcal{C}_{T}$ for each $T \in X$, can be viewed as a scheme over the complex numbers, Riemann's Existence Theorem implies that the \'{e}tale fundamental groups of $X$, $\mathcal{C}$, and each $\mathcal{C}_{T}$ (defined using a choice of geometric base point $\bar{T_{0}}$ over $T_{0}$) are isomorphic to the profinite completions of their respective topological fundamental groups.  Taking profinite completions induces a sequence of \'{e}tale fundamental groups
\begin{equation}\label{SES of fundamental groups of Jacobians} 1 \to \pi_{1}^{\acute{e}t}(\mathcal{C}_{\bar{T_{0}}}, 0_{\bar{T_{0}}}) \to \pi_{1}^{\acute{e}t}(\mathcal{C}, 0_{\bar{T_{0}}}) \to \pi_{1}^{\acute{e}t}(X, \bar{T_{0}}) \to 1, \end{equation}
which is a short exact sequence by \cite{grothendieck2002rev}, Corollaire X.2.2.  Moreover, the section $\bar{s}: X \to \bar{\mathcal{C}}$ similarly gives rise to an action of $\pi_{1}^{\acute{e}t}(X, \bar{T_{0}})$ on $\pi_{1}^{\acute{e}t}(\bar{\mathcal{C}}_{T_{0}}, \infty_{\bar{T_{0}}})^{\mathrm{ab}}$.

\emph{Step 3:} We switch from $\bar{C}$ to its Jacobian.  Define $\mathcal{J} \to X$ to be the abelian scheme representing the Picard functor of the scheme $\mathcal{C} \to X$ (see \cite{milne1986jacobian}, Theorem 8.1).  Note that $\mathcal{J}_{T}$ is the Jacobian of $\mathcal{C}_{T}$ for each $\cc$-point $T$ of $X$, and the generic fiber of $\mathcal{J}$ is $J / L$, the Jacobian of $C / L$.  Let $f_{\infty} : \bar{C} \to J$ be the morphism (defined over $L$) given by sending each point $P \in \bar{C}(L)$ to the divisor class $[(P) - (\infty)]$ in $\mathrm{Pic}_{L}^{0}(\bar{C})$, which is identified with $J(L)$.  By \cite{milne1986jacobian}, Proposition 9.1, the induced homomorphism of \'{e}tale fundamental groups $(f_{\infty})_{*} : \pi_{1}^{\acute{e}t}(\bar{C}, \infty) \to \pi_{1}^{\acute{e}t}(J, 0)$ factors through an isomorphism $\pi_{1}^{\acute{e}t}(\bar{C}, \infty)^{\mathrm{ab}} \stackrel{\sim}{\to} \pi_{1}^{\acute{e}t}(J, 0)$.  This induces an isomorphism $\pi_{1}^{\mathrm{\acute{e}t}}(\bar\mathcal{{C}}_{T}, \infty_{T})^{\mathrm{ab}} \stackrel{\sim}{\to} \pi_{1}^{\mathrm{\acute{e}t}}(\mathcal{J}_{T}, 0_{T})$ for each $T \in X$.  Note that the composition of the section $\bar{s} : X \to \bar{\mathcal{C}}$ with $f_{\infty}$ is the ``zero section" $o : X \to \mathcal{J}$ mapping each $T$ to the identity element $0_{T} \in \mathcal{J}_{T}$.  Thus, the action of $\pi_{1}^{\acute{e}t}(X, \bar{T_{0}})$ on $\pi_{1}^{\acute{e}t}(\mathcal{C}_{T_{0}}, \infty_{\bar{T_{0}}})^{\mathrm{ab}}$ coming from the splitting of (\ref{SES of fundamental groups}) is the same as the action of $\pi_{1}^{\acute{e}t}(X, \bar{T_{0}})$ on $\pi_{1}^{\acute{e}t}(\mathcal{J}_{\bar{T_{0}}}, 0_{\bar{T_{0}}})$ coming from the splitting of (\ref{SES of fundamental groups of Jacobians}) induced by the section $o_{*} : \pi_{1}^{\acute{e}t}(X, \bar{T_{0}}) \to \pi_{1}^{\acute{e}t}(\mathcal{J}, 0_{\bar{T_{0}}})$.

\emph{Step 4:} We now show that this action on $\pi_{1}^{\acute{e}t}(\mathcal{J}_{\bar{T_{0}}}, 0_{\bar{T_{0}}})$ is isomorphic to a Galois action on $\pi_{1}^{\acute{e}t}(J_{\bar{L}}, 0)$ (and therefore on its $\ell$-adic quotient $T_{\ell}(J)$).  Let $\eta : \mathrm{Spec}(L) \to X$ denote the generic point of $X$.  Note that we may identify $\pi_{1}^{\acute{e}t}(L, \bar{L})$ with $G_{L}$, and that $\eta_{*} : G_{L} \twoheadrightarrow \pi_{1}^{\acute{e}t}(X, \bar{\eta})$ is a surjection (in fact, it is the restriction homomorphism of Galois groups corresponding to the maximal algebraic extension of $L$ unramified at all points of $X$).  Also, the point $0 \in J_{L}$ may be viewed as a morphism $0 : \mathrm{Spec}(L) \to J_{L}$ which induces $0_{*} : G_{L} = \pi_{1}^{\acute{e}t}(L, \bar{L}) \to \pi_{1}^{\acute{e}t}(J_{L}, 0)$.  Let $\bar{T_{0}}$ and $\bar{\eta}$ be geometric points over $T_{0}$ and $\eta$ respectively.  Then we have (\cite{grothendieck2002rev}, Corollaire X.1.4) an exact sequence of \'{e}tale fundamental groups
\begin{equation} \label{SES of fundamental groups generic} \pi_{1}^{\acute{e}t}(\mathcal{J}, 0_{\bar{\eta}}) \to \pi_{1}^{\acute{e}t}(\mathcal{J}, 0_{\bar{\eta}}) \to \pi_{1}^{\acute{e}t}(X, \bar{\eta}) \to 1. \end{equation}
Changing the geometric basepoint of $X$ from $\bar{T_{0}}$ to $\bar{\eta}$ (resp. changing the geometric basepoint of $\mathcal{J}$ from $0_{\bar{\eta}}$ to $0_{\bar{T_{0}}}$) non-canonically induces an isomorphism $\pi_{1}^{\acute{e}t}(X, \bar{\eta}) \stackrel{\sim}{\to} \pi_{1}^{\acute{e}t}(X, \bar{T_{0}})$ (resp. an isomorphism $\pi_{1}^{\acute{e}t}(\mathcal{J}, 0_{\bar{\eta}}) \stackrel{\sim}{\to} \pi_{1}^{\acute{e}t}(\mathcal{J}, 0_{\bar{T_{0}}})$).  Fix such an isomorphism $\varphi: \pi_{1}^{\acute{e}t}(X, \bar{\eta}) \stackrel{\sim}{\to} \pi_{1}^{\acute{e}t}(X, \bar{T_{0}})$.  Then we have the following commutative diagram, where all horizontal rows are exact:
$$ \xymatrix{ 1 \ar[r]  & \pi_{1}^{\acute{e}t}(J_{\bar{L}}, 0) \ar[r] \ar@{=}[d] & \pi_{1}^{\acute{e}t}(J_{L}, 0) \ar[r] \ar@{->>}[d] & \pi_{1}^{\acute{e}t}(L, \bar{L}) \ar@/_1pc/[l]_{0_{*}} \ar[r] \ar@{->>}[d]^{\eta_{*}} & 1
\\    & \pi_{1}^{\acute{e}t}(\mathcal{J}_{\bar{\eta}}, 0_{\bar{\eta}}) \ar[r] \ar@{-->}[d]^{\mathrm{sp}} & \pi_{1}^{\acute{e}t}(\mathcal{J}, 0_{\bar{\eta}}) \ar[r] \ar[d]^{\wr} & \pi_{1}^{\acute{e}t}(X, \bar{\eta}) \ar@/_1pc/[l]_{o_{*}} \ar[r] \ar[d]^{\wr \; \varphi} & 1 
\\ 1 \ar[r] & \pi_{1}^{\acute{e}t}(\mathcal{J}_{\bar{T_{0}}}, 0_{\bar{T_{0}}}) \ar[r] & \pi_{1}^{\acute{e}t}(\mathcal{J}, 0_{\bar{T_{0}}}) \ar[r] & \pi_{1}^{\acute{e}t}(X, \bar{T_{0}}) \ar@/_1pc/[l]_{o_{*}} \ar[r] & 1 }$$
Here the vertical arrow from $\pi_{1}^{\acute{e}t}(\mathcal{J}, 0_{\bar{\eta}})$ to $\pi_{1}^{\acute{e}t}(\mathcal{J}, 0_{\bar{T_{0}}})$ is a change-of-basepoint isomorphism chosen to make the lower right square commute, and $\mathrm{sp} : \pi_{1}^{\acute{e}t}(\mathcal{J}_{\bar{\eta}}, 0_{\bar{\eta}}) \to \pi_{1}^{\acute{e}t}(\mathcal{J}_{\bar{T_{0}}}, 0_{\bar{T_{0}}})$ is the surjective homomorphism induced by a diagram chase on the bottom two horizontal rows.  Grothendieck's Specialization Theorem (\cite{grothendieck2002rev}, Corollaire X.3.9) states that $\mathrm{sp}$ is an isomorphism, which implies that the second row is also a short exact sequence.  Thus, the action of $\pi_{1}^{\acute{e}t}(X, \bar{T_{0}})$ on $\pi_{1}^{\acute{e}t}(\mathcal{J}_{\bar{T_{0}}}, 0_{\bar{T_{0}}})$ arising from the splitting of the lower row by $o_{*}$ is isomorphic to the action of $\pi_{1}^{\acute{e}t}(X, \bar{\eta})$ on $\pi_{1}^{\acute{e}t}(\mathcal{J}_{\bar{\eta}}, 0_{\bar{\eta}})$ arising from the splitting of the middle row by $o_{*}$, via the isomorphism $\mathrm{sp} : \pi_{1}^{\acute{e}t}(\mathcal{J}_{\bar{\eta}}, 0_{\bar{\eta}}) \to \pi_{1}^{\acute{e}t}(\mathcal{J}_{\bar{T_{0}}}, 0_{\bar{T_{0}}})$.  In turn, a simple diagram chase confirms that this action, after pre-composing with $\eta_{*} : \pi_{1}^{\acute{e}t}(L, \bar{L}) \twoheadrightarrow \pi_{1}^{\acute{e}t}(X, \bar{\eta})$, can be identified with the action of $\pi_{1}^{\acute{e}t}(L, \bar{L})$ on $\pi_{1}^{\acute{e}t}(J_{\bar{L}}, 0)$ arising from the splitting of the top row by $0_{*}$.  We denote this action by $\tilde{R} : G_{L} = \pi_{1}^{\acute{e}t}(L, \bar{L}) \to \mathrm{Aut}(\pi_{1}^{\acute{e}t}(J_{\bar{L}}, 0))$.  Since the Tate module $T_{\ell}(J)$ may be identified with the maximal pro-$\ell$ quotient of $\pi_{1}^{\acute{e}t}(J_{\bar{L}}, 0)$, $\tilde{R}$ induces an action of $G_{L}$ on $T_{\ell}(J)$, which we denote by $\tilde{R}_{\ell} : G_{L} \to \mathrm{Aut}(T_{\ell}(J))$.  One can identify the symplectic pairing on $\pi_{1}(\mathcal{J}_{T_{0}}, 0_{T_{0}})$ with the Weil pairing on $T_{\ell}(J)$ via the results in \cite{mumford1974abelian}, Chapter IV, \textsection 24.  Therefore, the image of $\tilde{R}_{\ell}$ is a subgroup of $\mathrm{Sp}(T_{\ell}(J))$.

By the above construction, we may identify the maximal pro-$\ell$ quotient of $\pi_{1}^{\acute{e}t}(\mathcal{J}_{\bar{T_{0}}}, 0_{\bar{T_{0}}})$ with $H_{1}(\mathcal{C}_{T_{0}}, \zz) \otimes \zz_{\ell}$.  Note that the isomorphism $\mathrm{sp} : \pi_{1}^{\acute{e}t}(\mathcal{J}_{\bar{\eta}}, 0_{\bar{\eta}}) \stackrel{\sim}{\to} \pi_{1}^{\acute{e}t}(\mathcal{J}_{\bar{T_{0}}}, 0_{\bar{T_{0}}})$ induces an isomorphism of their maximal pro-$\ell$ quotients $\mathrm{sp}_{\ell} : T_{\ell}(J) \stackrel{\sim}{\to} H_{1}(\mathcal{C}_{T_{0}}, \zz) \otimes \zz_{\ell}$.  By construction, the representation $\tilde{R}_{\ell}$ is isomorphic to the representation $R_{\ell}$ via $\mathrm{sp}_{\ell}$.

\emph{Step 5:} It now suffices to show that $\tilde{R}_{\ell} = \rho_{\ell}$.  To determine $\tilde{R}_{\ell}$, we are interested in the action of $G_{L}$ on the group $\mathrm{Aut}_{J_{\bar{L}}}(Z)$ for each $\ell$-power-degree covering $Z \to J_{\bar{L}}$.  But each such covering is a subcovering of $[\ell^{n}] : J_{\bar{L}} \to J_{\bar{L}}$, so it suffices to determine the action of $G_{L}$ on the group of translations $\{t_{P} | P \in J[\ell^{n}]\}$ for each $n$.  Recall that $0_{*} : G_{L} \to \pi_{1}^{\acute{e}t}(J_{L}, 0)$ is induced by the inclusion of the $L$-point $0 \in J_{L}$.  Thus, for any $\sigma \in G_{L}$, $0_{*}(\sigma)$ acts on any connected \'{e}tale cover of $J_{L}$ via $\sigma$ acting on the coordinates of the points.  Since $\tilde{R}(\sigma)$ is conjugation by $0_{*}(\sigma)$ on $\pi_{1}^{\acute{e}t}(J_{\bar{L}}, 0) \lhd \pi_{1}^{\acute{e}t}(J_{L}, 0)$, one sees that for each $n$, $0_{*}(\sigma)$ acts on $\{t_{P} | P \in J[\ell^{n}]\}$ by sending each $t_{P}$ to $\sigma^{-1}t_{P}\sigma = t_{P^{\sigma}}$.  Thus, $G_{L}$ acts on the Galois group of the covering $[\ell^{n}] : J_{\bar{L}} \to J_{\bar{L}}$ via the usual Galois action on $J[\ell^{n}]$.  This lifts to the usual action of $G_{L}$ on $T_{\ell}(J)$, and we are done.

\end{proof}

It is now easy to prove the main theorem.

\theoremstyle{remark}
\newtheorem*{proof of section 2 main theorem}{Proof (of Theorem \ref{thm: section 2 main theorem}) }
\begin{proof of section 2 main theorem}

Recall that $P_{2g + 1}$ is the normal subgroup of $B_{2g + 1} \cong \pi_{1}(X, T_{0})$ corresponding to the cover $Y \to X$, and the function field of $Y$ is $\cc(\alpha_{1}, ... , \alpha_{2g + 1}) = L_{1}$.  It follows that the image of $\mathrm{Gal}(\bar{L} / L_{1})$ under $\eta_{*}$ is $\widehat{P}_{2g + 1} \lhd \widehat{B}_{2g + 1} \cong \pi_{1}^{\acute{e}t}(X, \bar{T_{0}})$ (where $\widehat{P}_{2g + 1}$ denotes the profinite completion of $P_{2g + 1}$).  Therefore, the statement of Theorem \ref{thm: from Mumford} with $\ell = 2$ implies that the image of $\mathrm{Gal}(\bar{L} / L_{1})$ under $R_{2}$ is $\Gamma(2) \lhd \mathrm{Sp}(H_{1}(\mathcal{C}_{T_{0}}, \zz) \otimes \zz_{\ell})$.  It then follows from the statement of Lemma \ref{prop: isomorphic representations} that the image of $\mathrm{Gal}(\bar{L} / L_{1})$ under $\rho_{2}$ is $\Gamma(2) \lhd \mathrm{Sp}(T_{2}(J))$.

\qed
\end{proof of section 2 main theorem}

\section{Fields of $4$-torsion}

One application of Theorem \ref{thm: section 2 main theorem} is that it allows us to obtain an explicit description of $L_{2}$.  We will follow Yu's argument in \cite{Yu1997toward}.

\begin{prop}\label{prop: 2 and 4 case}
We have

$$L_{2} = L_{1}(\{\sqrt{\alpha_{i} - \alpha_{j}}\}_{1 \leq i < j \leq 2g + 1}).$$

\end{prop}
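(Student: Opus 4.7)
The plan is to show that both sides of the claimed equality are abelian $2$-extensions of $L_{1}$ of the same degree $2^{g(2g+1)}$, and then to verify that every $\sqrt{\alpha_{i} - \alpha_{j}}$ lies in $L_{2}$, so that the two fields must coincide.

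By Corollary~\ref{prop: description of G}(c) applied with $n = 2$, we have $\mathrm{Gal}(L_{2}/L_{1}) \cong \Gamma(2)/\Gamma(4)$. The assignment $I + 2A \mapsto A \bmod 2$, together with the congruences $(I + 2A)(I + 2B) \equiv I + 2(A + B) \pmod{4}$ and $(I + 2A)^{2} \equiv I \pmod{4}$, shows that this quotient is elementary abelian of exponent $2$ and of order $|\mathrm{Sp}_{2g}(\zz/4)|/|\mathrm{Sp}_{2g}(\zz/2)| = 2^{g(2g+1)}$, so $[L_{2} : L_{1}] = 2^{g(2g+1)}$. On the other side, $L_{1}(\{\sqrt{\alpha_{i} - \alpha_{j}}\}_{i<j})/L_{1}$ is an elementary abelian $2$-extension, and since $L_{1} \supseteq \mu_{2}$, Kummer theory gives $[L_{1}(\{\sqrt{\alpha_{i} - \alpha_{j}}\}_{i<j}) : L_{1}] = 2^{r}$, where $r$ is the $\mathbb{F}_{2}$-rank of the image of $\{\alpha_{i} - \alpha_{j}\}_{i<j}$ in $L_{1}^{*}/(L_{1}^{*})^{2}$. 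To compute $r$, I observe that $\cc[\alpha_{1}, \ldots, \alpha_{2g+1}]$ is a UFD in which the $\alpha_{i} - \alpha_{j}$ are distinct irreducibles, so any nontrivial product $\prod_{(i,j) \in S}(\alpha_{i} - \alpha_{j})$ has at least one irreducible factor of multiplicity exactly $1$ and is therefore not a square in $L_{1}$. Thus $r$ equals the total number $g(2g+1)$ of such pairs, giving $[L_{1}(\{\sqrt{\alpha_{i} - \alpha_{j}}\}_{i<j}) : L_{1}] = 2^{g(2g+1)} = [L_{2} : L_{1}]$.

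It remains to establish the inclusion $L_{1}(\{\sqrt{\alpha_{i} - \alpha_{j}}\}_{i<j}) \subseteq L_{2}$; this is the main obstacle. Following Yu's argument in \cite{Yu1997toward}, for each pair $i < j$ I would exhibit an explicit $4$-torsion point of $J$ whose field of definition over $L_{1}$ contains $\sqrt{\alpha_{i} - \alpha_{j}}$. Concretely, using Mumford's representation of points of $J$ by pairs $(U(x), V(x))$ together with Cantor's doubling formula, I would halve an appropriate $2$-torsion divisor class supported on Weierstrass points of $C$ and read off the required square roots from the resulting Mumford polynomials. The mechanism is already visible in the genus-$1$ case: halving the class $[P_{1} - \infty]$ on the elliptic curve forces the tangent line at the halving $Q = (x_{0}, y_{0})$ to pass through $P_{1}$, which after a short computation yields $Q = (\alpha_{1} + \sqrt{(\alpha_{1} - \alpha_{2})(\alpha_{1} - \alpha_{3})},\, y_{0})$ with $y_{0} = \sqrt{(\alpha_{1} - \alpha_{2})(\alpha_{1} - \alpha_{3})}\,(\sqrt{\alpha_{1} - \alpha_{2}} + \sqrt{\alpha_{1} - \alpha_{3}})$; the identity $(\sqrt{\alpha_{1} - \alpha_{2}} + \sqrt{\alpha_{1} - \alpha_{3}})(\sqrt{\alpha_{1} - \alpha_{2}} - \sqrt{\alpha_{1} - \alpha_{3}}) = \alpha_{3} - \alpha_{2} \in L_{1}$ then lets one recover each individual square root inside $L_{1}(Q) \subseteq L_{2}$. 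Running this Mumford-coordinate calculation systematically for all pairs in the higher-genus case produces every $\sqrt{\alpha_{i} - \alpha_{j}}$ inside $L_{2}$, and the equality of degrees established above then forces equality of the two fields.
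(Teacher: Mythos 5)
Your overall strategy (compute both degrees, then prove one inclusion) is sound, and the two degree computations are correct: $\Gamma(2)/\Gamma(4)$ is elementary abelian of order $2^{2g^{2}+g}$, and the UFD/Kummer argument correctly shows that the $\binom{2g+1}{2} = g(2g+1)$ classes of $\alpha_{i}-\alpha_{j}$ are independent in $L_{1}^{*}/(L_{1}^{*})^{2}$. The problem is the step you yourself flag as ``the main obstacle'': the inclusion $L_{1}(\{\sqrt{\alpha_{i}-\alpha_{j}}\}) \subseteq L_{2}$ is never actually established. You carry out the computation only for $g=1$ and then assert that ``running this Mumford-coordinate calculation systematically for all pairs in the higher-genus case produces every $\sqrt{\alpha_{i}-\alpha_{j}}$ inside $L_{2}$.'' For $g \geq 2$, halving a $2$-torsion divisor class and extracting the field generated by the resulting Mumford coordinates is a genuinely substantial computation (it is the content of separate papers on division by $2$ on hyperelliptic Jacobians), and nothing in your write-up shows how the individual square roots $\sqrt{\alpha_{i}-\alpha_{j}}$ would be isolated from the symmetric functions that actually appear in the coordinates of a $4$-torsion point. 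As it stands, the proof has a hole exactly at its load-bearing step.

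It is worth contrasting this with the paper's route, which avoids the explicit inclusion entirely. There, $L_{2}/L_{1}$ and $L_{1}(\{\sqrt{\alpha_{i}-\alpha_{j}}\})/L_{1}$ are both realized as function fields of covers of the ordered configuration space $X_{1}$ that are unramified away from the hyperplanes $\alpha_{i}=\alpha_{j}$ and have Galois group $(\zz/2\zz)^{2g^{2}+g}$. Since $\pi_{1}(X_{1}) \cong P_{2g+1}$ has abelianization free of rank exactly $2g^{2}+g$, its profinite completion has a \emph{unique} normal subgroup with quotient $(\zz/2\zz)^{2g^{2}+g}$, so the two covers must coincide. The uniqueness argument substitutes for the containment you could not supply; if you want to keep your more explicit approach, you would need to either import the division-by-two formulas for hyperelliptic Jacobians in arbitrary genus and verify the field generated, or replace your inclusion step with an unramifiedness-plus-uniqueness argument of this kind.
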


\begin{proof}

For $n \geq 1$, let $\mathcal{B}_{n}$ denote the set of bases of the free $\zz /2^{n}\zz$-module $\mathcal{J}_{T_{0}}[2^{n}]$.  Then it was shown in the proof of Theorem \ref{thm: section 2 main theorem} that $G_{L}$ acts on $\mathcal{B}_{n}$ through the map $R : \pi_{1}(X, T_{0}) \to \mathrm{Sp}(H_{1}(\mathcal{C}_{T_{0}}, \zz)) = \mathrm{Sp}(H_{1}(\mathcal{J}_{T_{0}}, \zz))$ in the statement of Theorem \ref{thm: from Mumford}, and the subgroup fixing all elements of $\mathcal{B}_{n}$ corresponds to $R^{-1}(\Gamma(2^{n})) \lhd \pi_{1}(X, T_{0})$.  Hence, by covering space theory, there is a connected cover $X_{n} \to X$ corresponding to an orbit of $\mathcal{B}_{n}$ under the action of $\pi_{1}(X, T_{0})$, and the function field of $X_{n}$ is the extension of $L$ fixed by the subgroup of $G_{L}$ which fixes all bases of $J[2^{n}]$.  Clearly, this extension is $L_{n}$.  Thus, the Galois cover $X_{n} \to X$ is an unramified morphism of connected affine schemes corresponding to the inclusion $L \hookrightarrow L_{n}$ of function fields.

Note that, setting $n = 1$, we get that $X_{1}$ is the Galois cover of $X$ whose \'{e}tale fundamental group can be identified with $R^{-1}(\Gamma(2)) \lhd \pi_{1}(X, T_{0})$.  Theorem \ref{thm: from Mumford} implies that $R^{-1}(\Gamma(2))$ is isomorphic to $\widehat{P}_{2g + 1}$, the profinite completion of $P_{2g + 1}$.  For $n \geq 1$, the \'{e}tale morphism $X_{n} \to X_{1}$ corresponds to the function field extension $L_{n} \supset L_{1}$, which by Corollary \ref{prop: description of G}(c) has Galois group isomorphic to $\Gamma(2) / \Gamma(2^{n})$.  Therefore, $X_{n}$ is the cover of $X_{1}$ whose \'{e}tale fundamental group can be identified with a normal subgroup of $\widehat{P}_{2g + 1}$ with quotient isomorphic to $\Gamma(2) / \Gamma(2^{n})$.

In the proof of Corollary 2.2 of \cite{sato2010abelianization}, it is shown that $\Gamma(2) / \Gamma(4) \cong (\zz / 2\zz)^{2g^{2} + g}$, and thus, 
\begin{equation}\label{Gamma(2)/Gamma(4)} \mathrm{Gal}(L_{2} / L_{1}) \cong \Gamma(2) / \Gamma(4) \cong (\zz / 2\zz)^{2g^{2} + g}. \end{equation}
It is also clear from looking at a presentation of the pure braid group $P_{2g + 1}$ (see for instance \cite{birman1974braids}, Lemma 1.8.2) that the abelianization of $P_{2g + 1}$ is a free abelian group of rank $2g^{2} + g$.  Therefore, its maximal abelian quotient of exponent $2$ is isomorphic to $(\zz / 2\zz)^{2g^{2} + g}$.  Thus, $\widehat{P}_{2g + 1}$ has a unique normal subgroup inducing a quotient isomorphic to $(\zz / 2\zz)^{2g^{2} + g}$.  It follows that there is only one Galois cover of $X_{1}$ with Galois group isomorphic to $\Gamma(2) / \Gamma(4)$, namely $X_{2}$.  The field extension $L_{1}(\{\sqrt{\alpha_{i} - \alpha_{j}}\}_{i < j}) \supset L_{1}$ is unramified away from the hyperplanes defined by $(\alpha_{i} - \alpha_{j})$ with $i \neq j$ and is obtained from $L_{1}$ by adjoining $2g^{2} + g$ independent square roots of elements in $L_{1}^{\times} \backslash (L_{1}^{\times})^{2}$.  Therefore, $L_{1}(\{\sqrt{\alpha_{i} - \alpha_{j}}\}_{i < j})$ is the function field of a Galois cover of $X(2)$ with Galois group isomorphic to $(\zz / 2\zz)^{2g^{2} + g} \cong \Gamma(2) / \Gamma(4)$.  It follows that this cover of $X_{1}$ is $X_{2}$, and that $L_{1}(\{\sqrt{\alpha_{i} - \alpha_{j}}\}_{i < j})$ is $L_{2}$, the function field of $X_{2}$.

\end{proof}

\section{Generalizations}

As in Section 1, let $k$ be an algebraic extension of $\qq$ which contains all $2$-power roots of unity, and let $K$ be the transcendental extension obtained by adjoining the coefficients of (\ref{hyperelliptic model}) to $k$.  We will also fix the following notation.  Let $C_{K}$ be the hyperelliptic curve defined over $K$ given by the equation (\ref{hyperelliptic model}), and let $J_{K}$ be its Jacobian.  For each $n \geq 0$, let $K_{n}$ be the extension of $K$ over which the $2^{n}$-torsion of $J_{K}$ is defined.  Note that, analogous to the situation with $C / L$, the extension $K_{2}$ is $k(\alpha_{1}, ... , \alpha_{2g + 1})$, which is Galois over $K$ with Galois group isomorphic to $S_{2g + 1}$.  Let $\rho_{2, K} : \mathrm{Gal}(K_{\infty} / K) \to \mathrm{Sp}(T_{2}(J_{K}))$ be the homomorphism arising from the Galois action on the Tate module of $J_{K}$.  We now investigate what happens to the Galois action when we descend from working over $\cc$ to working over $k$.  (In what follows, we canonically identify $T_{2}(J)$ with $T_{2}(J_{K})$ and $\Gamma(2^{n})$ with the level-$2^{n}$ congruence subgroup of $\mathrm{Sp}(T_{2}(J_{K}))$ for each $n \geq 0$.)

\begin{prop}\label{prop: descent}

The statements of Theorem \ref{thm: section 2 main theorem}, Corollary \ref{prop: description of G}, and Proposition \ref{prop: 2 and 4 case} are true when $L$ and $\rho_{2}$ are replaced by $K$ and $\rho_{2, K}$ respectively.

\end{prop}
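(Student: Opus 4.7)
The plan is to derive each statement over $K$ from its counterpart over $L$ via a restriction-of-Galois-groups argument, leveraging the inclusion $K \hookrightarrow L$ induced by $k \subseteq \cc$. Fix a compatible embedding $\bar{K} \hookrightarrow \bar{L}$; this yields a restriction homomorphism $\mathrm{res} : G_{L} \to G_{K}$. Since $J_{K} \times_{K} L \cong J$, we get a canonical identification $T_{2}(J_{K}) = T_{2}(J)$ under which $\rho_{2} = \rho_{2, K} \circ \mathrm{res}$, and the congruence subgroups $\Gamma(2^{n})$ match.

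For the analog of Theorem \ref{thm: section 2 main theorem}, the inclusion $\rho_{2, K}(\mathrm{Gal}(\bar{K}/K_{1})) \subseteq \Gamma(2)$ is immediate from $K_{1}$ being the field of definition of $J_{K}[2]$. For the reverse inclusion, $L_{1} = \cc(\alpha_{1}, \ldots, \alpha_{2g+1}) \supseteq k(\alpha_{1}, \ldots, \alpha_{2g+1}) = K_{1}$, so $\mathrm{res}$ sends $\mathrm{Gal}(\bar{L}/L_{1})$ into $\mathrm{Gal}(\bar{K}/K_{1})$; Theorem \ref{thm: section 2 main theorem} gives $\rho_{2}(\mathrm{Gal}(\bar{L}/L_{1})) = \Gamma(2)$, forcing $\rho_{2, K}(\mathrm{Gal}(\bar{K}/K_{1})) \supseteq \Gamma(2)$. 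The descent of Corollary \ref{prop: description of G} is then formal: its proof uses only the main theorem together with $\mathrm{Gal}(K_{1}/K) \cong S_{2g+1}$, which holds over $K$ by the same generic-polynomial argument as over $L$.

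For the analog of Proposition \ref{prop: 2 and 4 case}, the descended Corollary \ref{prop: description of G}(c) at $n = 2$ immediately yields $[K_{2} : K_{1}] = |\Gamma(2)/\Gamma(4)| = 2^{2g^{2}+g}$. The extension $E := K_{1}(\{\sqrt{\alpha_{i} - \alpha_{j}}\}_{i<j})/K_{1}$ is Kummer of exponent $2$ (as $k \supseteq \mu_{2}$), and a divisor argument in the UFD $k[\alpha_{1}, \ldots, \alpha_{2g+1}]$ (the $\alpha_{i} - \alpha_{j}$ are pairwise coprime irreducibles) shows that their classes are independent in $K_{1}^{\times}/K_{1}^{\times 2}$, so $[E:K_{1}] = 2^{2g^{2}+g}$ as well. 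Hence it suffices to establish the containment $E \subseteq K_{2}$.

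This containment is the main obstacle, and the purely Galois-theoretic argument used for the main theorem is insufficient to handle it. Proposition \ref{prop: 2 and 4 case} over $L$ only yields $\sqrt{\alpha_{i}-\alpha_{j}} \in L_{2} \cap \overline{K_{1}}$, and this intersection in general contains $K_{2} \cdot \bar{k}$, which is typically strictly larger than $K_{2}$; so the correct $k$-form cannot be pinned down from purely abstract considerations. The natural way to close the gap is to invoke the explicit formulas from \cite{Mumford1984tata} and \cite{Yu1997toward} expressing each $\sqrt{\alpha_{i} - \alpha_{j}}$ as a polynomial in the coordinates of $J_{K}[4]$ with coefficients in $\qq(\zeta_{4}) \subseteq k$; since $J_{K}[4]$ is rational over $K_{2}$, this places each $\sqrt{\alpha_{i} - \alpha_{j}}$ inside $K_{2}$, and equality $E = K_{2}$ follows from the degree match.
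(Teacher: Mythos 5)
Your treatment of the descended Theorem \ref{thm: section 2 main theorem} and Corollary \ref{prop: description of G} is correct and is in substance the paper's own argument: the paper packages the two containments as injectivity of the restriction maps $\theta_{n} : \Gal(L_{\infty}/L_{n}) \to \Gal(K_{\infty}/K_{n})$ (linear disjointness) together with the fact that $\Gal(K_{\infty}/K_{n})$ must land inside $\Gamma(2^{n})$, and then assembles $\theta_{0}$ with the Short Five Lemma; your ``lower bound by restriction from $L_{1}$, upper bound because $K_{1}$ splits the $2$-torsion'' is the same computation in different clothing.

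The divergence is in the descent of Proposition \ref{prop: 2 and 4 case}, and there your diagnosis is sharper than your cure. You are right that a $k$-form ambiguity is present: the paper deduces everything from ``$\theta_{n}$ is an isomorphism for all $n$,'' which yields $L_{2} = L_{1}K_{2}$ and $[K_{2}:K_{1}] = 2^{2g^{2}+g}$, but $L_{1}K_{2} = L_{1}E$ does not by itself force $K_{2} = E$ --- for a nonsquare $c \in k^{\times}$ the field $K_{1}(\{\sqrt{c(\alpha_{i}-\alpha_{j})}\})$ is another $S_{2g+1}$-stable exponent-$2$ extension of the same degree with the same compositum with $L_{1}$, so some arithmetic input beyond the isomorphism of Galois groups is needed. (The uniqueness-of-covers argument in the paper's proof of Proposition \ref{prop: 2 and 4 case} is special to $\cc$, where $\pi_{1}(Y)^{\mathrm{ab}} \otimes \zz/2\zz$ has rank exactly $2g^{2}+g$; over $k$, constant-field and twisted covers destroy that uniqueness.) However, your proposed fix --- citing ``explicit formulas in the references expressing each $\sqrt{\alpha_{i}-\alpha_{j}}$ as a polynomial in the coordinates of $J_{K}[4]$ with coefficients in $\qq(\zeta_{4})$'' --- is precisely the containment $E \subseteq K_{2}$ that has to be proved, and you do not exhibit such formulas; the paper invokes those sources only for the topological monodromy computation and the cover construction, never in this rational form. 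For $g = 1$ the formulas are classical and easy (if $2P = (\alpha_{1},0)$ then $x(P) - \alpha_{1} = \sqrt{\alpha_{1}-\alpha_{2}}\sqrt{\alpha_{1}-\alpha_{3}}$ and $y(P)/(x(P)-\alpha_{3}) = \sqrt{\alpha_{1}-\alpha_{2}}$, all with integer coefficients), but for general $g$ you would need to extract them from Mumford's theta-characteristic machinery or replace them by a specialization argument. As written, the decisive step of the Proposition \ref{prop: 2 and 4 case} descent rests on an unverified citation, so that part of your proof is incomplete.
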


\begin{proof}

For any $n \geq 0$, let $\theta_{n} : \mathrm{Gal}(L_{\infty} / L_{n}) \to \mathrm{Gal}(K_{\infty} / K_{n})$ be the composition of the obvious inclusion $\mathrm{Gal}(L_{\infty} / L_{n}) \hookrightarrow \mathrm{Gal}(L_{\infty} / K_{n})$ with the obvious restriction map $\mathrm{Gal}(L_{\infty} / K_{n}) \twoheadrightarrow \mathrm{Gal}(K_{\infty} / K_{n})$.  Let $\bar{\rho}_{2}^{(\infty)}$ (resp. $\bar{\rho}_{2, K}^{(\infty)}$) be the representation of $\mathrm{Gal}(L_{\infty} / L)$ (resp. $\mathrm{Gal}(K_{\infty} / K)$) induced from $\rho_{2}$ (resp. $\rho_{2, K}$) by the restriction homomorphism of the Galois groups.  It is easy to check that $\bar{\rho}_{2}^{(\infty)} = \bar{\rho}_{2, K}^{(\infty)} \circ \theta_{0}$.  It will suffice to show that $\theta_{0}$ is an isomorphism.

First, note that for any $n \geq 0$, $\theta_{n}$ is injective by the linear disjointness of $K_{\infty}$ and $L_{n}$ over $K_{n}$.  Now suppose that $n \geq 1$.  Then, as in the proof of Corollary \ref{prop: description of G}, the image under $\bar{\rho}$ of $\mathrm{Gal}(L_{\infty} / L_{n})$ is the entire congruence subgroup $\Gamma(2^{n})$.  Therefore, since $\theta_{n}$ is injective, the image under $\bar{\rho}_{K}$ of $\mathrm{Gal}(K_{\infty} / K_{n})$ contains $\Gamma(2^{n})$.  But since $K$ contains all $2$-power roots of unity, the Weil pairing is Galois invariant, and so the image of $\mathrm{Gal}(K_{\infty} / K_{n})$ must also be contained in $\Gamma(2^{n})$.  Therefore, $\theta_{n}$ is an isomorphism for $n \geq 1$.  Now, using Corollary \ref{prop: description of G}(a) and the fact that $\mathrm{Gal}(K(\alpha_{1}, ... , \alpha_{2g + 1}) / K) \cong S_{2g + 1}$, we get the commutative diagram below, whose top and bottom rows are short exact sequences.

$$ \xymatrix{ 1 \ar[r]  & \mathrm{Gal}(L_{\infty} / L_{1}) \ar[r] \ar[d]^{\theta_{1}} & \mathrm{Gal}(L_{\infty} / L) \ar[r] \ar[d]^{\theta_{0}} & S_{2g + 1} \ar[r] \ar@{=}[d]& 1
\\ 1 \ar[r] & \mathrm{Gal}(K_{\infty} / K_{1}) \ar[r] & \mathrm{Gal}(K_{\infty} / K) \ar[r] & S_{2g + 1} \ar[r] & 1 }$$

By the Short Five Lemma, since $\theta_{1}$ is an isomorphism, so is $\theta_{0}$.

\end{proof}

\begin{rmk}\label{rmk: generalizations}

a) Suppose we drop the assumption that $k$ contains all $2$-power roots of unity.  Then $\rho_{2, K}(G_{K})$ is no longer contained in $\mathrm{Sp}(T_{2}(J))$ in general.  However, the Galois equivariance of the Weil pairing forces the image of $\rho_{2, K}$ to be contained in the group of symplectic similitudes 
$$\mathrm{GSp}(T_{2}(J)) := \{\sigma \in \mathrm{Aut}(T_{2}(J))\ |\ E_{2}(P^{\sigma}, Q^{\sigma}) = \chi_{2}(\sigma)E_{2}(P, Q)\ \forall P, Q \in T_{2}(J)\},$$
where $\displaystyle E_{2} : T_{2}(J) \times T_{2}(J) \to \lim_{\leftarrow n} \mu_{2^{n}} \cong \zz_{2}$ is the Weil pairing on the $2$-adic Tate module of $J$, and $\chi_{2} : G_{K} \to \zz_{2}^{\times}$ is the cyclotomic character on the absolute Galois group of $K$.  Galois equivariance of the Weil pairing also implies that $K_{\infty}$ contains all $2$-power roots of unity.  Thus, $K_{\infty} \supset K(\mu_{2^{\infty}})$, and the statements referred to in Proposition \ref{prop: descent} still hold when we replace $K$ with $K(\mu_{2^{\infty}})$.

Furthermore, if $K$ contains $\sqrt{-1}$, the Weil pairing on $J[4]$ is Galois invariant, so the image of $\Gal(K_{2} / K_{1})$ coincides with $\Gamma(2) / \Gamma(4) \lhd \mathrm{Sp}(J[4])$ and is therefore isomorphic to $\Gal(L_{2} / L_{1})$.  It follows that Proposition \ref{prop: 2 and 4 case} still holds over $K(\sqrt{-1})$; that is, 
\begin{equation}\label{4-torsion generalization} K_{2} = K_{1}(\sqrt{-1}, \{\sqrt{\alpha_{i} - \alpha_{j}}\}_{1 \leq i < j \leq 2g + 1}). \end{equation}

b) In addition, suppose that $k$ is finitely generated over $\qq$ (for example, a number field).  We may specialize by assigning an element of $k$ to each coefficient of the degree-$(2g + 1)$ polynomial in (\ref{hyperelliptic model}), and defining the corresponding Jacobian $J_{k} / k$ and Galois representation $\rho_{2, k} : G_{k} \to \mathrm{Sp}(T_{2}(J_{k}))$.   Then we may use Proposition 1.3 of \cite{noot1995abelian} and its proof (see also \cite{serre4lettres}) to see that for infinitely many choices of $e_{1}, ... , e_{2g + 1} \in k$, $\rho_{2, k}(G_{k})$ can be identified with $\rho_{2, K}(G_{K})$ from part (a).  We have $\rho_{2, k}(\mathrm{Gal}(\bar{k} / k(\mu_{2^{\infty}}))) = \rho_{2, k}(G_{k}) \cap \mathrm{\mathrm{Sp}}(T_{2}(J_{k}))$, and therefore, the statements referred to in Proposition \ref{prop: descent} still hold over $k(\mu_{2^{\infty}})$.  Similarly, Proposition \ref{prop: 2 and 4 case} still holds over $k(\sqrt{-1})$.

\end{rmk}

\section*{Acknowledgements}

I am grateful to Yuri Zarhin for his many ideas and suggestions.  I would also like to thank the referee, who suggested that this material be presented in a separate paper, and whose corrections were helpful in improving the exposition.

\bibliographystyle{plain}
\bibliography{bibfile}

\end{document}